\newtheorem{thm}{Theorem}[section]
\newtheorem*{thm*}{Theorem}
\newtheorem{prop}[thm]{Proposition}
\newtheorem{rem}[thm]{Remark}
\DeclareMathOperator\F{\mathcal F}
\DeclareMathOperator\C{\mathbb{C}}
\DeclareMathOperator\R{\mathbb{R}}
\DeclareMathOperator\N{\mathbb{N}}
\DeclareMathOperator\E{\mathbb{E}}
\DeclareMathOperator\Span{span}
\DeclareMathOperator\id{\mathbf{1}}
\DeclareMathOperator\Tr{Tr}
\DeclareMathOperator\dett{det}
\DeclareMathOperator\sgn{sgn}
\DeclareMathOperator\alt{alt}
\DeclareMathOperator\reg{reg}
\DeclareMathOperator\GL{GL}
\title[HCIZ integral formula as unitarity of a canonical map]{HCIZ integral formula as unitarity of a canonical map between reproducing kernel spaces}
\date{\today}
\author{Martin Miglioli}
\email[Martin Miglioli]{martin.miglioli@gmail.com}
\address[Martin Miglioli]{Instituto Argentino de Matem\'atica-CONICET. Saavedra 15, Piso 3, (1083) Buenos Aires, Argentina}
\thanks{The author was supported by IAM-CONICET, grants PIP 2010-0757 (CONICET) and PICT 2010-2478 (ANPCyT)}
\begin{document}
\begin{abstract}
In this article we prove that the Harish-Chandra-Itzykson-Zuber (HCIZ) integral formula is equivalent to the unitarity of a canonical map between invariant subspaces of Segal-Bargmann spaces. As a consequence, we provide two new proofs of the HCIZ integral formula and alternative proofs of related results.\\

\medskip

\noindent \textbf{Keywords.} HCIZ integral, Segal Bargmann space, reproducing kernel, unitary map, Schur functions, complex Ginibre ensemble. 
\end{abstract}

\maketitle




\section{Introduction}
In \cite{harish} Harish-Chandra proved a formula for orbital integrals, see the expository article \cite{mcswiggen} and the references therein. The importance of such integrals for mathematical physics was first noted by Itzykson and Zuber \cite{itzykson}, the unitary integral is now known as the Harish-Chandra-Itzykson-Zuber (HCIZ) integral and has become an important identity in quantum field theory, random matrix theory, and algebraic combinatorics. It is usually written

\begin{align}\label{formulaoriginal}
\int_U e^{\Tr(uAu^{-1}B)}du=\left(\prod_{p=1}^{n-1}p!\right)\frac{\dett{\left[ e^{a_ib_j}\right] _{i,j=1}^n}}{\Delta(A)\Delta(B)}
\end{align}
where $\dett$ is the determinant of a matrix, $U$ is the group of $n$-by-$n$ unitary matrices, $A$ and $B$ are fixed $n$-by-$n$ diagonal matrices with eigenvalues $a_1< \dots <a_n$ and $b_1< \dots <b_n$ respectively, and
$$\Delta(A)=\prod_{i<j}(a_j-a_i)$$
is the Vandermonde determinant. In this article we link the HCIZ integral formula to the theory of Segal-Bargmann spaces thereby giving two new proofs of the integral formula. In Segal-Bargmann spaces \cite{segal,bargmann} holomorphic and reproducing kernel techniques are available. Also, the orthonormal bases and annihilation and creation operators have a simple form, see \cite{hall} and Chapter 4 of \cite{neretin}. Previous research on invariant subspaces of Segal-Bargmann spaces was done for example in \cite{rotinv} where rotationally invariant subspaces where studied. Also, in \cite[Section 6]{zhang} a unitarity result for the restriction to the Cartan algebra was given in the context of Dunkl theory. 

The paper is organized as follows. In Section \ref{secprel} we derive the reproducing kernels of invariant subspaces of two Segal-Bargmann spaces. We prove that the canonical map between these two spaces is unitary if and only if the HCIZ integral formula holds in Section \ref{secmain}. Finally, we provide alternative proofs of the HCIZ integral formula and other results in Section \ref{secalt}.

\section{Invariant subspaces of Segal-Bargmann spaces}\label{secprel}

In this section we recall results about Segal-Bargmann spaces and show properties of invariant subspaces of these spaces. References are Section 2, 3.2 and 6.1 of the lecture notes \cite{hall} and Chapter 4 of  \cite{neretin}. 

For $n\in \N$ let $\C^{n\times n}=M_n(\mathbb{C})$ be the space of $n\times n$ complex matrices with inner product $\langle x,y\rangle=\Tr(xy^*)$, where $\Tr$ is the trace. Let $U\subseteq M_n(\mathbb{C})$ be the group of unitary matrices and $\GL_n(\C)\subseteq M_n(\mathbb{C})$ the group of invertible matrices. The Segal-Bargmann space $\F(\C^{n\times n})$ is the space of holomorphic functions on $\C^{n\times n}$ such that 
$$\pi^{-n^2}\int_{\C^{n\times n}}\overline{F(z)}F(z)e^{-\Tr(z^*z)}dz<\infty$$
with inner product
$$\langle F,G\rangle = \pi^{-n^2}\int_{\C^{n\times n}}\overline{F(z)}G(z)e^{-\Tr(z^*z)}dz.$$
for $F,G\in\F(\C^{n\times n})$. For $a\in \C^{n\times n}$ the function 
$$K_a(z)=e^{\Tr(za^*)}$$ 
is called the coherent state with parameter $a$ and it satisfies
$$F(a)=\langle K_a,F\rangle$$
for all $F\in \F(\C^{n\times n})$. The function
$$K(z,a)=K_a(z)=e^{\Tr(za^*)}$$ 
is the reproducing kernel for the Segal-Bargmann space. We note that in $\F(\C^{n\times n})$ multiplication by a variable is the adjoint of derivation by the same variable. These are the creation and annihilation operators, see section 6.1 in \cite{hall}.

\begin{rem}
The Gaussian measure $\pi^{-n^2}e^{-\Tr(z^*z)}dz$ is the measure of the complex Ginibre ensemble, see \cite[Chapter 15]{mehta}. In this ensemble the expectation of the modulus squared of the sum of eigenvalues is
$$\E\vert\lambda_1(z)+\dots +\lambda_n(z)\vert^2=\langle \Tr(z),\Tr(z)\rangle =\left(\Tr\left(\frac{\partial}{\partial z}\right)\Tr(z)\right)\Bigg\vert_{z=0}=n,$$
where $\Tr(\frac{\partial}{\partial z})=\frac{\partial}{\partial z_{11}}+\cdots+\frac{\partial}{\partial z_{nn}}$. One can compute the inner product directly knowing that scaled monomials form an orthonormal basis, but we used the fact that multiplication by $z_{ij}$ is the adjoint of $\frac{\partial}{\partial z_{ij}}$. The expectation of the modulus squared of the product of eigenvalues is
$$\E\vert\lambda_1(z)\dots \lambda_n(z)\vert^2=\langle \dett(z),\dett(z)\rangle =\left(\dett\left(\frac{\partial}{\partial z}\right)\dett(z)\right)\Bigg\vert_{z=0}=n!.$$
\end{rem}

We define a unitary representation of $U$ on $\F(\C^{n\times n})$ by
$$u\mapsto (F(z)\mapsto F(u^{-1}zu))$$
for $u\in U$ and $F\in\F(\C^{n\times n})$. This representation is unitary since the Gaussian measure $\pi^{-n^2}e^{-\Tr(z^*z)}dz$ is invariant under conjugation by unitary matrices. The subspace of fixed points are the $F\in\F(\C^{n\times n})$ such that $F(z)= F(u^{-1}zu)$ for all $u\in U$:
$$\F(\C^{n\times n})^U=\{F\in\F(\C^{n\times n}): F(z)= F(u^{-1}zu)\mbox{ for all }u\in U\}.$$
We apply the ``unitarian trick'': for fixed $F\in\F(\C^{n\times n})^U$ and fixed $z\in \C^{n\times n}$ the map $\C^{n\times n}\to\C$ given by
$$x\mapsto F(z)- F(e^xze^{-x})$$
is holomorphic and vanishes for skew-Hermitian $x$, so it vanishes for all $x\in \C^{n\times n}$. Therefore, the functions in $\F(\C^{n\times n})^U$ are actually the functions invariant under conjugation by all $g\in \GL_n(\C)$, so they are functions of the spectrum. The orthogonal projection $P$ onto $\F(\C^{n\times n})^U$ is given by 
$$PF(x)=\int_U F(u^{-1}xu)du,$$ 
where the integral is over the Haar measure on $U$. The coherent states on $\F(\C^{n\times n})^U$ are given by 
$$Q_a=PK_a$$
since for $F\in \F(\C^{n\times n})^U$ and $a\in \C^{n\times n}$ we have 
$$F(a)=\langle K_a,F\rangle=\langle K_a,PF\rangle=\langle PK_a,F\rangle.$$
We denote by $Q: \C^{n\times n}\times\C^{n\times n}\to\C$ the reproducing kernel on $\F(\C^{n\times n})^U$. 

\begin{prop}\label{kernel1}
The reproducing kernel on $\F(\C^{n\times n})^U$ is given by
$$Q(z,a)=\int_U e^{\Tr(u^{-1}zua^*)}du$$
for $z,a\in \C^{n\times n}$.
\end{prop}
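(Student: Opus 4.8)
The plan is to obtain the formula as an immediate consequence of two facts established just before the statement: that the orthogonal projection $P$ onto $\F(\C^{n\times n})^U$ is the averaging operator $PF(x)=\int_U F(u^{-1}xu)\,du$, and that the reproducing kernel of the invariant subspace is $Q(z,a)=Q_a(z)$ with $Q_a=PK_a$. Granting these, the proof reduces to evaluating $PK_a$ explicitly, so there is in fact no genuine obstacle to overcome here.

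Concretely, I would first recall that $K_a\in\F(\C^{n\times n})$ is given by $K_a(y)=e^{\Tr(ya^*)}$, so that for fixed $z,a\in\C^{n\times n}$ the map $u\mapsto K_a(u^{-1}zu)=e^{\Tr(u^{-1}zu\,a^*)}$ is continuous on the compact group $U$; hence $(PK_a)(z)=\int_U K_a(u^{-1}zu)\,du$ is a genuine finite integral, with nothing to interchange or estimate. Substituting the explicit form of $K_a$ into the averaging formula then yields
$$Q(z,a)=Q_a(z)=(PK_a)(z)=\int_U K_a(u^{-1}zu)\,du=\int_U e^{\Tr(u^{-1}zu\,a^*)}\,du,$$
which is the asserted identity.

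Finally, I would record the consistency check that this kernel has the Hermitian symmetry $Q(z,a)=\overline{Q(a,z)}$ required of any reproducing kernel: conjugating the integrand replaces $\Tr(u^{-1}au\,z^*)$ by $\Tr\bigl((u^{-1}au\,z^*)^*\bigr)=\Tr(z\,u^{-1}a^*u)$, and then cyclicity of the trace together with the Haar-measure-preserving substitution $u\mapsto u^{-1}$ returns $\int_U e^{\Tr(u^{-1}zu\,a^*)}\,du$. Since all the structural input — unitarity of the representation $u\mapsto(F\mapsto F(u^{-1}\cdot\,u))$, the averaging description of $P$, and the identity $Q_a=PK_a$ — is already in place, the only point that really deserves a word is the identification of the pointwise averaging formula with the Hilbert-space orthogonal projection, which is precisely the ``unitarian trick'' argument carried out earlier in the section and which we simply invoke.
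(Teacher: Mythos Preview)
Your proof is correct and follows exactly the paper's approach: the paper's entire proof is the single line $Q(z,a)=PK_a(z)=\int_U e^{\Tr(u^{-1}zua^*)}\,du$, which is precisely your displayed computation. One small correction to your commentary: the identification of the averaging operator with the orthogonal projection does not rely on the ``unitarian trick'' (which in the paper serves to extend $U$-invariance to $\GL_n(\C)$-invariance), but rather on the standard fact that averaging over a compact group acting unitarily yields a self-adjoint idempotent onto the fixed subspace.
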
 

\begin{proof}
Note that for $z,a\in \C^{n\times n}$
$$Q(z,a)=PK_a(z)=\int_U e^{\Tr(u^{-1}zua^*)}du.$$
\end{proof}

We now apply analogous arguments for another Segal-Bargmann space and another unitary representation. Let $\C^{n}$ be endowed with the dot product $x\cdot y=\sum_{i=1}^nx_iy_i$ and let $S_n$ be the symmetric group of degree $n$. The Segal-Bargmann space $\F(\C^{n})$ is the space of holomorphic functions on $\C^{n}$ such that 
$$\pi^{-n}\int_{\C^{n}}\overline{F(z)}F(z)e^{-\vert z\vert^2}dz<\infty$$
where $\vert z\vert^2=z\cdot\overline{z}$. The inner product is
$$\langle F,G\rangle = \pi^{-n}\int_{\C^{n}}\overline{F(z)}G(z)e^{-\vert z\vert^2}dz$$
for $F,G\in \F(\C^{n})$. The function
$$K(z,a)=K_a(z)=e^{z\cdot\overline{a}}$$ 
is the reproducing kernel for the Segal-Bargmann space. We define a unitary representation of $S_n$ on $\F(\C^{n})$ as
$$\sigma F(z_i)=\sgn(\sigma)F(z_{\sigma^{-1}(i)}),$$
where $\sgn(\sigma)$ denotes the sign of the permutation $\sigma$. The fixed point space is the space $\F(\C^{n})_{\alt}$ of alternating holomorphic functions:
$$\F(\C^{n})_{\alt}=\{F\in\F(\C^{n})): F(z_1,\dots,z_n)= \sgn(\sigma)F(z_{\sigma^{-1}(1)},\dots,z_{\sigma^{-1}(n)})\mbox{ for all }\sigma\in S_n\}.$$
The orthogonal projection $P$ onto this space is given by 
$$PF=\frac{1}{n!}\sum_{\sigma\in S_n}\sigma F.$$
The coherent states on $\F(\C^{n})_{\alt}$ are given by 
$$R_a=PK_a$$
since for $F\in \F(\C^{n})_{\alt}$ and $a\in \C^{n}$ we have 
$$F(a)=\langle K_a,F\rangle=\langle K_a,PF\rangle=\langle PK_a,F\rangle.$$
We denote by $R: \C^{n}\times\C^{n}\to\C$ the reproducing kernel on $\F(\C^{n})_{\alt}$.
\begin{prop}\label{kernel2}
The reproducing kernel on $\F(\C^{n})_{\alt}$ is given by
$$R(z,a)=\frac{1}{n!}\sum_{\sigma\in S_n}\sgn(\sigma)e^{\sigma(z)\cdot\overline{a}}.$$
\end{prop}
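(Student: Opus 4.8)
The plan is to repeat verbatim the argument used for Proposition \ref{kernel1}. Recall that the coherent states on $\F(\C^{n})_{\alt}$ were identified above as $R_a=PK_a$, where $P$ is the orthogonal projection onto the alternating subspace and $PF=\frac{1}{n!}\sum_{\sigma\in S_n}\sigma F$. Since the reproducing kernel satisfies $R(z,a)=R_a(z)$, it suffices to evaluate $PK_a$ at a point $z$.

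The only computation is to apply the representation to the coherent state $K_a(z)=e^{z\cdot\overline a}=e^{\sum_i z_i\overline{a_i}}$. By definition of the $S_n$-action,
$$(\sigma K_a)(z)=\sgn(\sigma)\,K_a(z_{\sigma^{-1}(1)},\dots,z_{\sigma^{-1}(n)})=\sgn(\sigma)\,e^{\sum_i z_{\sigma^{-1}(i)}\overline{a_i}}=\sgn(\sigma)\,e^{\sigma(z)\cdot\overline a},$$
where in the last step the sum in the exponent is reindexed so that $\sigma$ acts on the coordinates of $z$ in the natural way. Averaging over $\sigma$ gives $PK_a(z)=\frac{1}{n!}\sum_{\sigma\in S_n}\sgn(\sigma)\,e^{\sigma(z)\cdot\overline a}$, which is the claimed formula; note that any ambiguity between $\sigma$ and $\sigma^{-1}$ introduced by the reindexing is irrelevant, since $\sigma\mapsto\sigma^{-1}$ is a bijection of $S_n$ with $\sgn(\sigma^{-1})=\sgn(\sigma)$. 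If one prefers, one may alternatively recognize $n!\,R(z,a)$ as the determinant $\dett\!\left[\,e^{z_i\overline{a_j}}\,\right]_{i,j=1}^n$, but this is not needed here.

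Finally, one checks that $R_a$ really reproduces: for $F\in\F(\C^{n})_{\alt}$ and $a\in\C^{n}$, self-adjointness of $P$ together with $PF=F$ gives $F(a)=\langle K_a,F\rangle=\langle K_a,PF\rangle=\langle PK_a,F\rangle=\langle R_a,F\rangle$, so $R_a=PK_a$ is the reproducing element at $a$. There is no real obstacle in this proof; the only place requiring a moment's attention is the bookkeeping between $\sigma$ and $\sigma^{-1}$ in the exponent, which, as noted, is immaterial after symmetrization.
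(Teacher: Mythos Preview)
Your proof is correct and follows exactly the same route as the paper's own argument: apply the projection $P$ to the coherent state $K_a$ and read off the formula $R(z,a)=PK_a(z)=\frac{1}{n!}\sum_{\sigma}\sgn(\sigma)e^{\sigma(z)\cdot\overline a}$. You merely spell out in more detail the action of $\sigma$ on $K_a$ and the harmless $\sigma\leftrightarrow\sigma^{-1}$ reindexing, which the paper leaves implicit.
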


\begin{proof}
Note that for $z,a\in \C^{n}$
$$R(z,a)=PK_a(z)=\frac{1}{n!}\sum_{\sigma\in S_n}\sigma K_a(z)=\frac{1}{n!}\sum_{\sigma\in S_n}\sgn(\sigma)e^{\sigma(z)\cdot\overline{a}}.$$
\end{proof}

\section{HCIZ integral formula as unitarity of a canonical map}\label{secmain}

In this section we prove the main result of the article. The next proposition gives a formulation of the HCIZ integral formula in terms of inner products in $\F(\C^{n\times n})^U$ and in $\F(\C^{n})_{\alt}$. We denote with $D\subseteq \C^{n\times n}$ the set of complex diagonal matrices, with $D_{\reg}$ the set of complex diagonal matrices with distinct eigenvalues, and with $D_{\R}$ the set of diagonal matrices with real entries. As before $\Delta$ denotes the Vandermonde determinant. We set the constant
$$c=\left(\prod_{p=1}^{n}p!\right)^{-\frac{1}{2}}.$$ 

\begin{prop}\label{propigauldaddeprodint}
The HCIZ integral formula is equivalent to
\begin{equation}\label{formprodint}
\left\langle Q_x,Q_y\right\rangle_{\F(\C^{n\times n})^U}=\left\langle \frac{R_x}{c\Delta(\overline{x})}, \frac{R_y}{c\Delta(\overline{y})}\right\rangle_{\F(\C^{n})_{\alt}}
\end{equation}
for all $x,y\in D_{\reg}$.
\end{prop}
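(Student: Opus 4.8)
I would prove the equivalence by evaluating both inner products in \eqref{formprodint} in closed form; neither evaluation uses the HCIZ formula, only Propositions \ref{kernel1}--\ref{kernel2}, the reproducing property, and the fact that the projections $P$ are self-adjoint idempotents. In $\F(\C^{n\times n})^U$ one has $\langle Q_x,Q_y\rangle=\langle PK_x,PK_y\rangle=\langle K_x,PK_y\rangle=(PK_y)(x)=Q(x,y)$, so by Proposition \ref{kernel1}
$$\langle Q_x,Q_y\rangle=\int_U e^{\Tr(u^{-1}xuy^{*})}\,du .$$
For $x,y\in D_{\reg}$ the matrix $y^{*}$ is diagonal with entries $\overline{y_1},\dots,\overline{y_n}$, so this is an entire function of $(x_1,\dots,x_n,\overline{y_1},\dots,\overline{y_n})$; I record this for the continuation step below.

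\medskip

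\noindent\textbf{The other side.} The identical projection argument in $\F(\C^{n})_{\alt}$ gives $\langle R_x,R_y\rangle=R(x,y)$, and Proposition \ref{kernel2} together with the Leibniz expansion of a determinant (after relabelling the permutation sum) yields
$$\langle R_x,R_y\rangle=\frac{1}{n!}\sum_{\sigma\in S_n}\sgn(\sigma)\,e^{\sigma(x)\cdot\overline{y}}=\frac{1}{n!}\det\!\big[e^{x_i\overline{y_j}}\big]_{i,j=1}^{n}.$$
Pulling the scalars out of the inner product (conjugate-linear in the first slot), using that $c\in\R$ and $\overline{\Delta(\overline{x})}=\Delta(x)$, and using $c^{-2}=\prod_{p=1}^{n}p!$, one gets
$$\Big\langle \tfrac{R_x}{c\Delta(\overline{x})},\tfrac{R_y}{c\Delta(\overline{y})}\Big\rangle=\frac{c^{-2}}{n!}\cdot\frac{\det\!\big[e^{x_i\overline{y_j}}\big]_{i,j=1}^{n}}{\Delta(x)\,\Delta(\overline{y})}=\Big(\prod_{p=1}^{n-1}p!\Big)\frac{\det\!\big[e^{x_i\overline{y_j}}\big]_{i,j=1}^{n}}{\Delta(x)\,\Delta(\overline{y})}.$$

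\medskip

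\noindent\textbf{Conclusion.} Comparing the two displays, \eqref{formprodint} holds for all $x,y\in D_{\reg}$ precisely when $\int_U e^{\Tr(u^{-1}xuy^{*})}\,du=\big(\prod_{p=1}^{n-1}p!\big)\det[e^{x_i\overline{y_j}}]/(\Delta(x)\Delta(\overline{y}))$ on $D_{\reg}\times D_{\reg}$. Applying the substitution $u\mapsto u^{-1}$ (Haar measure is inversion invariant) and setting $A=x$, $B=y^{*}$, the left-hand side becomes $\int_U e^{\Tr(uAu^{-1}B)}\,du$; for $x,y\in D_{\R}$ this identity is exactly \eqref{formulaoriginal}, the ordering $a_1<\dots<a_n$, $b_1<\dots<b_n$ being irrelevant since both sides of \eqref{formulaoriginal} change sign under a transposition of eigenvalues while the Haar integral does not. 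This already gives the implication ``\eqref{formprodint}$\Rightarrow$HCIZ''. For the reverse implication I would promote \eqref{formulaoriginal} from real to complex eigenvalues: after multiplying through by the Vandermonde factors, both sides of \eqref{formulaoriginal} are entire in $(a_1,\dots,a_n,b_1,\dots,b_n)\in\C^{2n}$ and agree on a nonempty open subset of $\R^{2n}$, hence on all of $\C^{2n}$; specializing $a_i=x_i$, $b_j=\overline{y_j}$ then recovers that identity for every $x,y\in D_{\reg}$, which by the two computations above is \eqref{formprodint}.

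\medskip

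\noindent\textbf{Expected difficulty.} None of the steps is deep. The points that need care are the bookkeeping of conjugates and of the constant $c$ as the scalars leave the inner product in $\F(\C^{n})_{\alt}$, and the matching of domains: \eqref{formulaoriginal} is stated for real, ordered spectra whereas \eqref{formprodint} ranges over the complex set $D_{\reg}$, so one must supply both the antisymmetry observation and the analytic-continuation argument above.
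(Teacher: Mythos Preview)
Your approach is essentially the paper's own: compute each inner product via the reproducing property and Propositions~\ref{kernel1}--\ref{kernel2}, identify the result with the two sides of \eqref{formulaoriginal}, and pass between real and complex $x,y$ by analytic continuation. One wording slip to fix: both sides of \eqref{formulaoriginal} are \emph{invariant} under a transposition of eigenvalues (on the right the sign changes in $\det[e^{a_ib_j}]$ and in $\Delta$ cancel), so your sentence ``both sides of \eqref{formulaoriginal} change sign \dots\ while the Haar integral does not'' is self-contradictory as written, even though your conclusion that the ordering is irrelevant is correct.
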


\begin{proof}
The left hand side is
$$\left\langle Q_x,Q_y\right\rangle=Q_y(x)=\int_U e^{\Tr(u^{-1}xuy^*)}du,$$
where we used the reproducing property and the formula for the kernel given in Proposition \ref{kernel1}. The right hand side is
\begin{align*}
\left\langle \frac{R_x}{c\Delta(\overline{x})}, \frac{R_y}{c\Delta(\overline{y})}\right\rangle
&=\frac{1}{c^2\Delta(x)\Delta(\overline{y})}R_y(x)\\
&=\left(\prod_{p=1}^{n}p!\right)\frac{1}{\Delta(x)\Delta(\overline{y})}R_y(x)\\
&=\left(\prod_{p=1}^{n}p!\right)\frac{1}{\Delta(x)\Delta(\overline{y})}\frac{1}{n!}\sum_{\sigma\in S_n}\sgn(\sigma)e^{\sigma(x)\cdot\overline{y}},
\end{align*}
where in the third equality we used the formula for the reproducing kernel given in Proposition \ref{kernel2}. The HCIZ formula (\ref{formulaoriginal}) states that the left hand side and the right hand side agree when $x,y\in D_{\reg}\cap D_{\R}$. Since both sides are holomorphic in $x$ and anti-holomorphic in $y$ we get equality for all $x,y\in D_{\reg}$. The converse is straightforward. 
\end{proof}

\begin{thm}\label{teoprincipal}
The HCIZ integral formula implies that the map $\psi:\F(\C^{n\times n})^U\to\F(\C^{n})_{\alt}$ given by
$$\psi(F)(x)=c\Delta(x) F\vert_{D}(x)$$
for $F\in\F(\C^{n\times n})^U$ and $x\in D$ is well defined and unitary. Conversely, if this map is well defined and unitary, then the HCIZ integral formula holds. The map $\psi$ satisfies 
$$\psi(Q_x)=\frac{R_x}{c\Delta(\overline{x})}$$
for any $x\in D_{\reg}$.
\end{thm}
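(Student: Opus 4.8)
The plan is to show that $\psi$ is a well-defined unitary, with the identity $\psi(Q_x)=R_x/(c\Delta(\overline x))$ emerging naturally along the way, and that the whole package is equivalent to Proposition~\ref{propigauldaddeprodint}. First I would record the elementary observation that the coherent states $\{Q_x : x\in D_{\reg}\}$ span a dense subspace of $\F(\C^{n\times n})^U$. This is because $\F(\C^{n\times n})^U$ consists of functions of the spectrum, hence is determined by restriction to $D$, and on $D$ the closure of the span of $\{Q_x\}$ contains enough to separate points; more robustly, if $F\in\F(\C^{n\times n})^U$ is orthogonal to every $Q_x$ with $x\in D_{\reg}$ then $F(x)=\langle Q_x,F\rangle=0$ for all $x\in D_{\reg}$, and since $D_{\reg}$ is dense in $D$ and $F$ restricted to $D$ is holomorphic, $F\vert_D\equiv 0$; as $F$ is a function of eigenvalues this forces $F\equiv 0$. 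Similarly $\{R_y : y\in D_{\reg}\}$ is dense in $\F(\C^n)_{\alt}$, by the analogous argument using that an alternating holomorphic function vanishing on all of $D$ is identically zero (it is determined by its values on $\C^n$, and $D\cong \C^n$ here).

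Next I would \emph{define} a map $\psi_0$ on the dense subspace $\Span\{Q_x\}$ by $\psi_0(Q_x)=R_x/(c\Delta(\overline x))$ and check it is well defined and isometric. Both follow at once from Proposition~\ref{propigauldaddeprodint}: if $\sum_i \alpha_i Q_{x_i}=0$ in $\F(\C^{n\times n})^U$, then for any $y$ we get $0=\langle \sum_i\alpha_i Q_{x_i}, \, \cdot\,\rangle$ applied against $Q_y$, and (\ref{formprodint}) lets us transfer the full Gram matrix of the $Q_{x_i}$'s to that of the $R_{x_i}/(c\Delta(\overline{x_i}))$'s, so $\|\sum_i\alpha_i R_{x_i}/(c\Delta(\overline{x_i}))\|^2 = \|\sum_i\alpha_i Q_{x_i}\|^2 = 0$; the same computation gives isometry in general. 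An isometry with dense domain and dense range extends uniquely to a unitary $\psi:\F(\C^{n\times n})^U\to\F(\C^n)_{\alt}$, and by construction $\psi(Q_x)=R_x/(c\Delta(\overline x))$ for $x\in D_{\reg}$.

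Then I would identify this abstractly-constructed $\psi$ with the explicit formula $\psi(F)(x)=c\Delta(x)F\vert_D(x)$. It suffices to check the two agree on the dense set $\{Q_x\}$. Using the reproducing property in $\F(\C^n)_{\alt}$, for $x\in D_{\reg}$ and $w\in D$ we have $\psi(Q_x)(w) = \tfrac{1}{c\Delta(\overline x)}R_x(w) = \tfrac{1}{c\Delta(\overline x)}\overline{R_w(x)}$; on the other hand $c\Delta(w)\,Q_x\vert_D(w) = c\Delta(w)\,\overline{Q_w(x)} = c\Delta(w)\,\overline{\langle Q_x,Q_w\rangle}$, and expanding $\overline{\langle Q_x,Q_w\rangle}$ via (\ref{formprodint}) shows both expressions coincide; analytic continuation in $w$ off $D_{\reg}$ handles the Vandermonde-vanishing locus. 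This shows the operator defined by the explicit formula is the unitary $\psi$, in particular it is well defined as a map into $\F(\C^n)_{\alt}$ and unitary. For the converse direction, if $\psi$ given by the explicit formula is a well-defined unitary, then $\langle Q_x,Q_y\rangle = \langle \psi Q_x,\psi Q_y\rangle$; one computes $\psi Q_x(w)=c\Delta(w)\overline{Q_w(x)}$ and recognises, again by reproducing properties, that $\psi Q_x = R_x/(c\Delta(\overline x))$ (this is where well-definedness into $\F(\C^n)_{\alt}$ is used — the explicit formula a priori lands in holomorphic functions on $\C^n$, and being in the alternating subspace forces it to be $R_x/(c\Delta(\overline x))$ up to checking it reproduces correctly), so (\ref{formprodint}) holds for all $x,y\in D_{\reg}$, which by Proposition~\ref{propigauldaddeprodint} is the HCIZ formula.

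The main obstacle I anticipate is not any single deep estimate but the care needed around the Vandermonde zero set and the passage between "holomorphic on $\C^n$" and "in the alternating Segal--Bargmann subspace". Concretely: the explicit formula $\psi(F)(x)=c\Delta(x)F\vert_D(x)$ must be shown to actually produce an element of $\F(\C^n)_{\alt}$ (square-integrability and the alternating symmetry), and conversely, on the restriction side, $F\mapsto F\vert_D$ must be injective on $\F(\C^{n\times n})^U$; both points rely on the structural fact established in Section~\ref{secprel} that $U$-invariant elements of $\F(\C^{n\times n})$ are symmetric functions of the eigenvalues, together with the classical correspondence between symmetric polynomials and $\Delta$ times alternating polynomials. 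Handling these on the dense span of coherent states, where everything is an explicit finite exponential sum, and then extending by continuity/analytic continuation, is the technical heart of the argument.
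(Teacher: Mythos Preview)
Your approach is essentially the same as the paper's: define the map on coherent states via $Q_x\mapsto R_x/(c\Delta(\overline x))$, use Proposition~\ref{propigauldaddeprodint} to get well-definedness and isometry, invoke density of $\{Q_x\}_{x\in D_{\reg}}$ and $\{R_x\}_{x\in D_{\reg}}$ to extend to a unitary, and then identify the result with the explicit formula $F\mapsto c\Delta\cdot F\vert_D$.

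Two points where the paper is tighter than your write-up. First, for the identification step you check $\psi_0(Q_x)(w)=c\Delta(w)Q_x(w)$ by re-invoking~(\ref{formprodint}) and then pass to general $F$ by density; the paper instead runs the single chain
\[
F(x)=\langle Q_x,F\rangle=\langle \phi(Q_x),\phi(F)\rangle=\Big\langle \tfrac{R_x}{c\Delta(\overline x)},\phi(F)\Big\rangle=\tfrac{1}{c\Delta(x)}\phi(F)(x)
\]
for arbitrary $F$, which gives $\phi(F)(x)=c\Delta(x)F(x)$ in one line without a second appeal to~(\ref{formprodint}). Second, your converse is under-specified: the phrase ``recognises, again by reproducing properties, that $\psi Q_x=R_x/(c\Delta(\overline x))$'' is exactly the nontrivial step, and you cannot use~(\ref{formprodint}) there since that is what you are proving. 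The paper fills this by computing, from unitarity and the explicit formula, both $\langle \psi(Q_x),\psi(F)\rangle=F(x)$ and $\langle R_x/(c\Delta(\overline x)),\psi(F)\rangle=F(x)$ for every $F$, whence $\psi(Q_x)=R_x/(c\Delta(\overline x))$; you should make this explicit.
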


\begin{proof}
Define the map 
$$\phi:\Span\{Q_x\}_{x\in D_{\reg}}\to \Span\left\{\frac{R_x}{c\Delta(\overline{x})}\right\}_{x\in D_{\reg}}$$
by
$$\sum_{i=1}^m\alpha_iQ_{x_i}\mapsto \sum_{i=1}^m\alpha_i\frac{R_{x_i}}{c\Delta(\overline{x_i})},$$
where $\alpha_i\in\C$. Equation (\ref{formprodint}) in Proposition \ref{propigauldaddeprodint} implies that the map is well defined and an isometry, therefore it extends to an isometry between 
$$\overline{\Span}\{Q_x\}_{x\in D_{\reg}}\quad\mbox{ and }\quad \overline{\Span}\left\{\frac{R_x}{c\Delta(\overline{x})}\right\}_{x\in D_{\reg}},$$
where $\overline{\Span}(A)$ denotes the closure of $\Span(A)$. 

We now prove that $\overline{\Span}\{Q_x\}_{x\in D_{\reg}}=\F(\C^{n\times n})^U$. If this does not hold take an $F\in \F(\C^{n\times n})^U$ which is orthogonal to $\Span\{Q_x\}_{x\in D_{\reg}}$. This means that 
$$F(x)=\langle Q_x,F\rangle=0$$
for all $x\in D_{\reg}$. Since $F$ is invariant by conjugation of the variables it vanishes on all $g\in \GL_n(\C)$ with $n$ distinct eigenvalues. Since these matrices are dense in $\C^{n\times n}$ we conclude that $F=0$. The fact that 
$$\overline{\Span}\left\{\frac{R_x}{c\Delta(\overline{x})}\right\}_{x\in D_{\reg}}=\F(\C^{n})_{\alt}$$
is proved similarly. Therefore $\phi$ defines a unitary map from $\F(\C^{n\times n})^U$ onto $\F(\C^{n})_{\alt}$. For $F\in\F(\C^{n\times n})^U$ and $x\in D_{\reg}$ we get
\begin{align*}
F(x)=&\langle Q_x,F\rangle=\langle \phi(Q_x),\phi(F)\rangle=\left\langle\frac{R_x}{c\Delta(\overline{x})},\phi(F) \right\rangle\\
&=\frac{1}{c\Delta(x)}\langle R_x,\phi(F)\rangle=\frac{1}{c\Delta(x)}\phi(F)(x),
\end{align*}
where the first and last equalities follow from the reproducing property, the second from the unitarity of $\phi$, and the third from the definition of $\phi$. Therefore 
$$\phi(F)(x)=c\Delta(x)F(x)$$
for $x\in D_{\reg}$. Hence, the map $\phi$ is equal to the map $\psi$ and the first assertion of the theorem is proved.

To prove the second assertion assume that $\psi$ is well defined and unitary. Then for $F\in\F(\C^{n\times n})^U$ and $x\in D_{\reg}$ we have 
$$F(x)=\langle Q_x,F\rangle=\langle \psi(Q_x), \psi(F)\rangle=\langle \psi(Q_x),c\Delta F\vert_D\rangle.$$  
Also 
$$\langle R_x, \psi(F)\rangle=\langle R_x, c\Delta F\vert_D\rangle=c\Delta(x)F(x),$$
hence 
$$F(x)=\left\langle \frac{R_x}{c\Delta(\overline{x})}, \psi(F)\right\rangle.$$
Therefore for fixed $x\in D_{\reg}$
$$\langle \psi(Q_x), \psi(F)\rangle=\left\langle \frac{R_x}{c\Delta(\overline{x})}, \psi(F)\right\rangle$$
for all $F\in \F(\C^{n\times n})^U$, and since $\psi$ is unitary this implies that 
$$\psi(Q_x)=\frac{R_x}{c\Delta(\overline{x})}.$$
From this property and Proposition \ref{propigauldaddeprodint} the HCIZ formula follows.
\end{proof}

\begin{rem}
Note that multiplication by $\Delta$ provides a linear isomorphism between symmetric and alternating polynomials on $\C^n$. Therefore, Theorem \ref{teoprincipal} implies that the restriction map $F\mapsto F\vert_D$ is a linear isomorphism between conjugation invariant polynomials on $\C^{n\times n}$ and symmetric polynomials on $\C^n$. This is a psecial case of the Chevalley restriction theorem, see \cite{chevalley}.

\end{rem}

\section{Differentiation of polynomials and orthonormal bases}\label{secalt}

In this section we give alternative proofs of the HCIZ integral and other results. For a polynomial $F$ we define $F^*(z)=\overline{F(\overline{z})}$, that is, the coefficients of $F^*$ are the complex conjugates of the coefficients of $F$. An alternative proof of the HCIZ integral formula is given by the following

\begin{prop}
The unitarity of the map $\psi$ in Theorem \ref{teoprincipal} implies that for polynomials $F,G\in F\in \F(\C^{n\times n})^U$ the equation
\begin{align}\label{formuladif}
\Delta(x).F\left(\frac{\partial}{\partial z}\right)G(x)=\left(F\Big\vert_D\left(\frac{\partial}{\partial z_1},\dots,\frac{\partial}{\partial z_n}\right)(\Delta. G\vert_D)\right) (x)
\end{align}
holds for all $x\in D$. Conversely, if formula (\ref{formuladif}) holds for all polynomials $F,G\in \F(\C^{n\times n})^U$, then the map $\psi$ is unitary.
\end{prop}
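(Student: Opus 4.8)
The plan is to translate the unitarity of $\psi$ into a statement about inner products of coherent states, and then to recognize that the differentiation identity (\ref{formuladif}) is exactly the same statement expressed in the monomial (creation/annihilation) picture. The bridge between the two pictures is the standard fact, recalled in Section \ref{secprel}, that in a Segal-Bargmann space multiplication by a variable is the adjoint of differentiation by that variable; equivalently, for polynomials $F,G$ one has $\langle F,G\rangle_{\F} = \bigl(F^*\!\left(\tfrac{\partial}{\partial z}\right)G\bigr)(0)$ in $\F(\C^n)$, and the analogous formula with $\Tr\!\bigl(\tfrac{\partial}{\partial z}\bigr)$-type operators in $\F(\C^{n\times n})$.

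First I would assume $\psi$ is unitary and take polynomials $F,G\in\F(\C^{n\times n})^U$. Unitarity gives $\langle F,G\rangle_{\F(\C^{n\times n})^U} = \langle \psi(F),\psi(G)\rangle_{\F(\C^{n})_{\alt}} = \langle c\Delta\,F|_D,\, c\Delta\,G|_D\rangle_{\F(\C^n)}$. Now I would evaluate the right-hand inner product using the differentiation formula for the Segal-Bargmann inner product on $\C^n$: it equals $c^2\bigl((\Delta F|_D)^*\!\left(\tfrac{\partial}{\partial z_1},\dots,\tfrac{\partial}{\partial z_n}\right)(\Delta\,G|_D)\bigr)(0)$. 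Since $\Delta$ has real (in fact integer) coefficients and $F|_D$ is a polynomial, $(\Delta F|_D)^* = \Delta\cdot(F|_D)^*$; and I would use the well-known identity $\Delta\!\left(\tfrac{\partial}{\partial z}\right)\Delta = n!$ (more precisely, $\Delta\!\left(\tfrac{\partial}{\partial z}\right)$ applied to $\Delta$ times anything symmetric interacts cleanly with $\Delta$), so that the left-hand side $\langle F,G\rangle_{\F(\C^{n\times n})^U}$ can be written as $\bigl(F^*\!\left(\tfrac{\partial}{\partial z}\right)G\bigr)(0)$ using conjugation invariance to reduce the matrix differentiation to the diagonal. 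Matching coefficients — or rather, testing against all pairs $F,G$ — the equality of these two expressions for every $F,G$ is equivalent to the pointwise identity (\ref{formuladif}) evaluated at $x=0$, and then a standard argument (replacing $G$ by $K_a$-type data, or translating, or simply noting both sides of (\ref{formuladif}) are polynomials determined by the bilinear form above) promotes it from $x=0$ to all $x\in D$.

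For the converse, assuming (\ref{formuladif}) holds for all polynomial $F,G\in\F(\C^{n\times n})^U$, I would run the computation backwards: integrating (i.e. taking the value at a suitable point, or using the adjoint relations) shows $\langle F,G\rangle_{\F(\C^{n\times n})^U} = \langle c\Delta\,F|_D, c\Delta\,G|_D\rangle_{\F(\C^n)_{\alt}}$ for all polynomials $F,G$; since polynomials are dense in $\F(\C^{n\times n})^U$ and $\psi$ maps them into $\F(\C^n)_{\alt}$ densely (multiplication by $\Delta$ is a bijection from symmetric to alternating polynomials, as noted in the remark), $\psi$ extends to a unitary.

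The main obstacle I expect is the bookkeeping in passing from the matrix Segal-Bargmann inner product on $\F(\C^{n\times n})^U$ to a differentiation identity purely on the diagonal: one must justify that, for conjugation-invariant $F,G$, the full matrix operator $F^*\!\left(\tfrac{\partial}{\partial z}\right)$ acting on $G$ can be replaced — after evaluation at a diagonal point — by the diagonal-restricted operator $F|_D^*\!\left(\tfrac{\partial}{\partial z_1},\dots,\tfrac{\partial}{\partial z_n}\right)$ acting on $G|_D$, up to the $\Delta\!\left(\tfrac{\partial}{\partial z}\right)\Delta$ normalization. This is where the $c = (\prod_{p=1}^n p!)^{-1/2}$ constant and the factor $n!$ must be reconciled, and it is essentially a Weyl-integration/Harish-Chandra radial-part computation in disguise; handling it carefully, rather than the final density argument, is the crux.
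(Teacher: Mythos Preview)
Your converse direction is essentially the paper's argument and is fine. The forward direction, however, has a genuine gap that you yourself flag at the end: you obtain the equality
\[
\bigl(F^*(\partial_{\mathrm{matrix}})G\bigr)(0)=c^2\bigl(\Delta(\partial)\,F^*|_D(\partial)\,(\Delta\,G|_D)\bigr)(0)
\]
from unitarity, but the passage from this single evaluation at $0$ to the \emph{pointwise} identity (\ref{formuladif}) at every $x\in D$ is not a ``standard argument.'' Translating is not available in the Gaussian setting, and ``replacing $G$ by $K_a$-type data'' already presupposes you know how the matrix differential operator $F^*(\partial)$ acts on invariant functions when restricted to the diagonal --- which is precisely the content of (\ref{formuladif}). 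Your closing paragraph concedes that what remains is ``essentially a Weyl-integration/Harish-Chandra radial-part computation in disguise''; but that radial-part formula \emph{is} (\ref{formuladif}), so the argument as written is circular at its crux.

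The paper avoids this entirely by a short operator-theoretic trick you are missing. From the very definition of $\psi$ one has the trivial intertwining of \emph{multiplication} operators,
\[
M_F=\psi^{-1}\circ M_{F|_D}\circ\psi,
\]
for every invariant polynomial $F$. Taking adjoints and using unitarity of $\psi$ (so that $(\psi^{-1})^*=\psi$) immediately converts this into an intertwining of \emph{differentiation} operators,
\[
F^*\!\left(\tfrac{\partial}{\partial z}\right)=\psi^{-1}\circ F^*|_D\!\left(\tfrac{\partial}{\partial z_1},\dots,\tfrac{\partial}{\partial z_n}\right)\circ\psi,
\]
which is an equality of operators on $\F(\C^{n\times n})^U$, not merely of values at $0$. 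Applying both sides to $G$ and unwinding $\psi$ and $\psi^{-1}$ gives (\ref{formuladif}) at every $x\in D_{\reg}$ in one line, with no promotion step and no appeal to any radial-part analysis. This is the missing idea: pass through the obvious multiplication intertwining first, then take adjoints.
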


\begin{proof}
We denote by $M_FG=F.G$ the multiplication operator. For a polynomial $F\in \F(\C^{n\times n})^U$ it is easy to check that 
$$M_F=\psi^{-1}\circ M_{F\vert_D}\circ \psi.$$
Therefore, by applying adjoints 
$$F^*\left(\frac{\partial}{\partial z}\right)=(M_F)^*=\psi^{-1}\circ (M_{F\vert_D})^*\circ \psi=\psi^{-1}\circ F^*\Big\vert_D\left(\frac{\partial}{\partial z_1},\dots,\frac{\partial}{\partial z_n}\right)\circ \psi.$$
Here we used the unitarity of $\psi$ and the fact that multiplication by a variable is the adjoint of derivation by the same variable. We evaluate
$$ \left(F^*\Big\vert_D\left(\frac{\partial}{\partial z_1},\dots,\frac{\partial}{\partial z_n}\right)\circ \psi\right)G=cF^*\Big\vert_D\left(\frac{\partial}{\partial z_1},\dots,\frac{\partial}{\partial z_n}\right)(\Delta.G\vert_D).$$
Applying $\psi^{-1}$ and evaluating at $x\in D_{\reg}$ is the same as multiplying by
$$\frac{1}{c\Delta(x)},$$
so we get the formula for $F^*$. Since $F=(F^*)^*$ the first claim follows.

We denote for simplicity $\partial=\frac{\partial}{\partial z}$. To prove the second claim we need to verify that for all polynomials $F,G\in F\in \F(\C^{n\times n})^U$ the equality
$$\langle F^*,G\rangle=\langle \psi(F^*),\psi(G)\rangle=\langle c\Delta F^*\vert_D,c\Delta G\vert_D\rangle$$
holds. By the definition of the inner product in terms of differentiation at $z=0$ we have 
$$\langle F^*,G\rangle=F(\partial)G\vert_{z=0}$$
and
$$\langle c\Delta F^*\vert_D,c\Delta G\vert_D\rangle=c^2\Delta(\partial)F\vert_D(\partial)(\Delta G\vert_D)\vert_{z=0}.$$
Therefore, we need to verify that 
$$F(\partial)G\vert_{z=0}=c^2\Delta(\partial)F\vert_D(\partial)(\Delta G\vert_D)\vert_{z=0},$$
which by formula (\ref{formuladif}) is equivalent to
$$\frac{1}{\Delta} F\vert_D(\partial)(\Delta G\vert_D)\Big\vert_{z=0}=c^2\Delta(\partial)F\vert_D(\partial)(\Delta G\vert_D)\vert_{z=0}.$$
We set 
$$F\vert_D(\partial)(\Delta G\vert_D)=\Delta(d+H),$$
where $d\in\C$ and $H$ is a symmetric polynomial on $\C^n$ without constant term.

Hence, we have to show that
$$\frac{1}{\Delta}\Delta(d+H)\Big\vert_{z=0}=c^2\Delta(\partial)(\Delta(d+H))\vert_{z=0}.$$
The left hand side is $d$ and the right hand side is
$$c^2d\Delta(\partial)\Delta\vert_{z=0}+c^2\Delta(\partial)(\Delta.H)\vert_{z=0}.$$
We have 
$$\Delta(\partial)\Delta\vert_{z=0}=\left(\prod_{p=1}^{n}p!\right)=\frac{1}{c^2}.$$
Since $\Delta$ is homogeneous of degree $(n-1)!$ and $\Delta.H$ is a sum of monomials of higher degree we get
$$\Delta(\partial)(\Delta.H)\vert_{z=0}=0.$$
Therefore the right hand side is also $d$.
\end{proof}

Formula (\ref{formuladif}) was obtained by Harish-Chandra in \cite{harish} for semi-simple Lie groups. His proof of (\ref{formulaoriginal}) is based on this result, see \cite{harish} and Theorem 3.8 in \cite{mcswiggen}. 

\begin{rem}
If $F\in  \F(\C^{n\times n})^U$ is a polynomial then
$$F\left(\frac{\partial}{\partial z}\right)Q_a=F(\overline{a})Q_a$$
for any $a\in\C^{n\times n}$. This follows from 
$$\left\langle F\left(\frac{\partial}{\partial z}\right)Q_a,G\right\rangle=\langle Q_a,F^*.G\rangle=F^*(a)G(a)=F^*(a)\langle Q_a,G\rangle=\langle\overline{F^*(a)}Q_a,G\rangle$$
for any polynomial $G\in  \F(\C^{n\times n})^U$. By the same argument
$$F\left(\frac{\partial}{\partial z}\right) R_a=F(\overline{a})R_a$$
for any $a\in\C^n$ and any symmetric polynomial $F$ on $\C^n$. 
\end{rem}

Let $\N_0$ stand for the non negative integers. For $\mu\in \N_0^n$ we denote the monomials as usual with $z^{\mu}=z_1^{\mu_1}\dots z_n^{\mu_n}$ and we use the notation $\mu!=\mu_1!\dots\mu_n!$.  In the Segal-Bargmann space $\F(\C^n)$ an orthonormal basis of the space is
$$\left(\frac{1}{\sqrt{\mu!}}z^{\mu}\right)_{\mu\in \N_0^n},$$
see \cite[Section 3.2]{hall}. The set $\Pi$ of partitions is defined as
$$\Pi=\{\lambda\in \N_0^n:\lambda_1\geq\lambda_2\geq\dots\geq\lambda_n\}.$$
We set $\delta=(n-1,n-2,\dots,0)$ and $\id=(1,1,\dots,1)$. For $\lambda\in\Pi$, if $(x_1,\dots,x_n)$ are the eigenvalues of $x\in \GL_n(\C)$ we define 
$$\chi_{\lambda}(x) = s_{\lambda}(x_1,\dots,x_n),$$ 
where $s_{\lambda}$ is a Schur polynomial. These polynomials are defined by
$$s_{\lambda}(x_1,\dots,x_n)=\frac{a_{\lambda+\delta}(x_1,\dots,x_n)}{a_{\delta}(x_1,\dots,x_n)},$$
where 
$$a_{\mu}(x_1,\dots,x_n)=\dett\left[x_i^{\mu_j}\right]_{i,j=1}^n.$$
Note that $a_{\delta}(x_1,\dots,x_n)=\Delta(x_1,\dots,x_n)$ is the Vandermonde determinant.

\begin{rem}
The irreducible polynomial representations of the general linear group $\GL_n(\C)$ are labelled by Young diagrams, which we think of as vectors $\lambda \in\Pi$. The character of the $\lambda$-representation is given by $\chi_{\lambda}$.
\end{rem}

For $\lambda\in\Pi$ we denote
$$d_{\lambda}(z)=\frac{1}{\sqrt{n!(\lambda+\delta)!}} a_{\lambda+\delta}(z)\in\F(\C^n)_{\alt}.$$

\begin{prop}\label{ortonalt}
An orthonormal basis of the space $\F(\C^n)_{\alt}$ is given by
$$ \left(d_{\lambda}\right)_{\lambda\in\Pi}.$$ 
\end{prop}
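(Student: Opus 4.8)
The plan is to produce $(d_\lambda)_{\lambda\in\Pi}$ by pushing the standard monomial orthonormal basis $(z^\mu/\sqrt{\mu!})_{\mu\in\N_0^n}$ of $\F(\C^n)$ through the orthogonal projection $P=\frac1{n!}\sum_{\sigma\in S_n}\sigma$ onto $\F(\C^n)_{\alt}$, identifying the images with scalar multiples of the $a_{\lambda+\delta}$, and then checking orthonormality by hand.

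First I would record the $S_n$-action on monomials. From $\sigma F(z_i)=\sgn(\sigma)F(z_{\sigma^{-1}(i)})$ one gets $\sigma(z^\mu)=\sgn(\sigma)\,z^{\sigma\cdot\mu}$, where $(\sigma\cdot\mu)_i=\mu_{\sigma(i)}$, together with the composition rule $\sigma\cdot(\tau\cdot\mu)=(\tau\sigma)\cdot\mu$; hence
$$P(z^\mu)=\frac1{n!}\sum_{\sigma\in S_n}\sgn(\sigma)\,z^{\sigma\cdot\mu}.$$
If $\mu$ has two equal coordinates, let $\tau$ transpose the corresponding indices; then $\tau\cdot\mu=\mu$, and substituting $\sigma\mapsto\tau\sigma$ in the sum flips the overall sign, so $P(z^\mu)=0$. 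If the coordinates of $\mu$ are pairwise distinct, sorting them in strictly decreasing order yields a unique tuple of the form $\lambda+\delta$ with $\lambda\in\Pi$ (because $\mu_{(i)}-(n-i)$ is then weakly decreasing and $\geq 0$); writing $\mu=\tau\cdot(\lambda+\delta)$, relabelling the summation index via $\sigma\mapsto\tau^{-1}\sigma$, and recognizing $a_\nu(z)=\dett[z_i^{\nu_j}]_{i,j=1}^n=\sum_{\sigma\in S_n}\sgn(\sigma)z^{\sigma\cdot\nu}$, one obtains $P(z^\mu)=\frac{\sgn(\tau)}{n!}\,a_{\lambda+\delta}(z)$. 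Since $\{Pe_k\}$ has dense span in the range of $P$ whenever $\{e_k\}$ is an orthonormal basis, it follows that $\F(\C^n)_{\alt}=\overline{\Span}\{a_{\lambda+\delta}:\lambda\in\Pi\}$, and in particular each $d_\lambda$ lies in $\F(\C^n)_{\alt}$.

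It remains to verify that $(d_\lambda)_{\lambda\in\Pi}$ is orthonormal. The monomials appearing in $a_{\lambda+\delta}$ are exactly the $z^{\sigma\cdot(\lambda+\delta)}$, $\sigma\in S_n$, i.e. those whose exponent vector is a rearrangement of $\lambda+\delta$; since $\lambda+\delta$ is already strictly decreasing, distinct $\lambda\neq\lambda'$ in $\Pi$ give rearrangement-disjoint tuples, so $a_{\lambda+\delta}$ and $a_{\lambda'+\delta}$ are supported on disjoint families of mutually orthogonal monomials and hence are orthogonal. Moreover, $\lambda+\delta$ having pairwise distinct entries makes $\sigma\mapsto\sigma\cdot(\lambda+\delta)$ injective, so $a_{\lambda+\delta}$ is a sum of $n!$ distinct monomials, each with coefficient $\pm1$ and squared norm $(\sigma\cdot(\lambda+\delta))!=(\lambda+\delta)!$; therefore $\|a_{\lambda+\delta}\|^2=n!\,(\lambda+\delta)!$ and $d_\lambda=a_{\lambda+\delta}/\sqrt{n!(\lambda+\delta)!}$ is a unit vector. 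Together with the previous paragraph this shows $(d_\lambda)_{\lambda\in\Pi}$ is an orthonormal basis of $\F(\C^n)_{\alt}$. The only genuinely delicate point is the bookkeeping in the middle step — keeping the $S_n$-action, its composition law, and the attendant signs straight so that $P(z^\mu)$ is correctly identified with $\pm a_{\lambda+\delta}/n!$; once that is in hand, both orthogonality (disjoint supports) and normalization (counting $n!$ equinorm monomials) are immediate.
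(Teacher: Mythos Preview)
Your proof is correct and follows essentially the same approach as the paper: both hinge on computing $P(z^\mu)$ and identifying it (up to sign and the factor $1/n!$) with $a_{\lambda+\delta}$ when $\mu$ has distinct entries, and showing it vanishes otherwise. The paper leaves orthonormality as ``straightforward'' and phrases completeness contrapositively (if $F\perp a_{\lambda+\delta}$ for all $\lambda$ then $F\perp z^\mu$ for all $\mu$, hence $F=0$), whereas you carry out the orthonormality computation explicitly via disjoint monomial supports and argue completeness directly from the fact that projecting an orthonormal basis gives a total set in the range; these are minor reformulations of the same argument.
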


\begin{proof}
It is straightforward to check that this set is an orthonormal set. To check that it is a basis assume that $F\in\F(\C^n)_{\alt}$ and that $\langle F,a_{\lambda+\delta}\rangle=0$ for all $\lambda\in\Pi$. Take a $\mu\in\N_0^n$ and assume that all the $\mu_i$ are different. Then $z^{\mu}=\sigma(z^{\lambda+\delta})$ for a $\sigma\in S_n$ and a $\lambda\in\Pi$. We consider as in Section \ref{secprel} the orthogonal projection $P$ onto $\F(\C^n)_{\alt}$. Then 
$$\langle F,z^{\mu}\rangle=\langle PF,z^{\mu}\rangle=\langle F,Pz^{\mu}\rangle=\left\langle F,\sgn(\sigma)\frac{1}{n!}a_{\lambda+\delta}\right\rangle=0.$$
If there are $i\neq j$ such that $\mu_i=\mu_j$ take as $\sigma$ the transposition of $i$ and $j$. Then 
$$\langle F,z^{\mu}\rangle=\langle \sigma(F),z^{\mu}\rangle=\langle F,\sigma(z^{\mu})\rangle=-\langle F,z^{\mu}\rangle,$$
so that $\langle F,z^{\mu}\rangle=0$. We proved that the inner product of $F$ with all the elements of the orthonormal basis of $\F(\C^n)$ vanish, so $F=0$.
\end{proof}

For $\lambda\in\Pi$ we denote
$$e_{\lambda}(z)=\sqrt{\frac{\delta !}{(\lambda+\delta)!}} \chi_{\lambda}(z)\in\F(\C^{n\times n})^{U}.$$
Another proof of the HCIZ integral formula is given by

\begin{prop}
The unitarity of the map $\psi$ in Theorem \ref{teoprincipal} implies that $(e_{\lambda})_{\lambda\in\Pi}$ is an orthonormal basis of $\F(\C^{n\times n})^{U}$. Conversely, the fact that $(e_{\lambda})_{\lambda\in\Pi}$ is an orthonormal basis implies the unitarity of $\psi$.
\end{prop}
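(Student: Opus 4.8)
\emph{Plan.} The plan is to reduce both implications to the single identity $\psi(e_{\lambda})=d_{\lambda}$ for every $\lambda\in\Pi$, combined with Proposition~\ref{ortonalt}. The only arithmetic input needed is the constant identity $c^{-2}=\prod_{p=1}^{n}p!=n!\,\delta!$, which follows from $\delta!=(n-1)!(n-2)!\cdots 0!=\prod_{p=1}^{n-1}p!$.

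To prove the identity, fix $\lambda\in\Pi$ and $x\in D$ with diagonal entries $(x_{1},\dots,x_{n})$, so that $\chi_{\lambda}(x)=s_{\lambda}(x_{1},\dots,x_{n})=a_{\lambda+\delta}(x)/a_{\delta}(x)$ and $a_{\delta}(x)=\Delta(x)$. Straight from the definitions of $\psi$, $e_{\lambda}$, $d_{\lambda}$ and $c$,
\begin{align*}
\psi(e_{\lambda})(x)
&=c\,\Delta(x)\sqrt{\frac{\delta!}{(\lambda+\delta)!}}\,\chi_{\lambda}(x)
 =c\,\Delta(x)\sqrt{\frac{\delta!}{(\lambda+\delta)!}}\,\frac{a_{\lambda+\delta}(x)}{a_{\delta}(x)}\\
&=c\sqrt{\frac{\delta!}{(\lambda+\delta)!}}\,a_{\lambda+\delta}(x)
 =\frac{1}{\sqrt{n!\,(\lambda+\delta)!}}\,a_{\lambda+\delta}(x)=d_{\lambda}(x),
\end{align*}
where $\psi(e_{\lambda})$ is read as a holomorphic function on $D\cong\C^{n}$. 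Hence $\psi(e_{\lambda})=d_{\lambda}$. For the first implication this already suffices: if $\psi$ is unitary then so is $\psi^{-1}$, and since $(d_{\lambda})_{\lambda\in\Pi}$ is an orthonormal basis of $\F(\C^{n})_{\alt}$ by Proposition~\ref{ortonalt}, the family $(e_{\lambda})_{\lambda\in\Pi}=(\psi^{-1}(d_{\lambda}))_{\lambda\in\Pi}$ is an orthonormal basis of $\F(\C^{n\times n})^{U}$.

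For the converse, assume $(e_{\lambda})_{\lambda\in\Pi}$ is an orthonormal basis of $\F(\C^{n\times n})^{U}$. Since $(d_{\lambda})_{\lambda\in\Pi}$ is an orthonormal basis of $\F(\C^{n})_{\alt}$, the assignment $e_{\lambda}\mapsto d_{\lambda}$ extends to a unitary operator $\Phi\colon\F(\C^{n\times n})^{U}\to\F(\C^{n})_{\alt}$, and the task is to identify $\Phi$ with $\psi$. On $\Span\{e_{\lambda}\}_{\lambda\in\Pi}$, a dense subspace consisting of conjugation-invariant polynomials, the maps $\Phi$ and $\psi$ agree, by linearity and the identity above. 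For a general $F=\sum_{\lambda}c_{\lambda}e_{\lambda}$ with $\sum_{\lambda}|c_{\lambda}|^{2}=\|F\|^{2}<\infty$, the finite partial sums converge to $F$ in $\F(\C^{n\times n})^{U}$, hence pointwise on $\C^{n\times n}$ because point evaluations are continuous (they are given by the reproducing kernel $Q$); multiplying by $c\Delta(x)$ and restricting to $D$ shows that $\psi(F)(x)$ equals the pointwise limit of the partial sums of $\sum_{\lambda}c_{\lambda}d_{\lambda}$, which in turn equals $\Phi(F)(x)$ for $x\in D\cong\C^{n}$. As $\psi(F)$ is holomorphic on $\C^{n}$, this yields $\psi(F)=\Phi(F)\in\F(\C^{n})_{\alt}$; thus $\psi=\Phi$ is well defined and unitary, and the HCIZ integral formula then follows from Theorem~\ref{teoprincipal}.

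The main obstacle is precisely this last identification. The map $\psi$ is handed to us only through an explicit pointwise recipe $F\mapsto c\Delta\cdot F|_{D}$, manifestly linear on polynomials but not a priori bounded or even well defined on all of $\F(\C^{n\times n})^{U}$; upgrading the agreement $\psi=\Phi$ from the dense polynomial subspace to the whole space is where the reproducing-kernel structure (continuity of point evaluations, so that norm convergence forces pointwise convergence) does the real work. Everything else is the bookkeeping behind $\psi(e_{\lambda})=d_{\lambda}$.
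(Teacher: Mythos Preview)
Your proof is correct and follows exactly the paper's approach: the paper's entire argument is the one-line verification $\psi(e_{\lambda})=c\,a_{\delta}\,e_{\lambda}\vert_{D}=d_{\lambda}$ followed by ``the proposition follows,'' and you have supplied the computation of that identity (via $c^{-2}=n!\,\delta!$) together with the details the paper leaves implicit. In particular, your use of continuity of point evaluations to pass from the dense polynomial span to all of $\F(\C^{n\times n})^{U}$ in the converse direction is the natural way to unpack the paper's terse ``the proposition follows.''
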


\begin{proof}
We check that for $\lambda \in \Pi$ 
$$\psi(e_{\lambda})=ca_{\delta}e_{\lambda}\vert_D=d_{\lambda}.$$
The proposition follows.
\end{proof}

The fact that $(e_{\lambda})_{\lambda\in \Pi}$ is orthonormal was proved in \cite[Proposition 2]{rains}. Since $\F(\C^{n\times n})^{U}$ and $\F(\C^{n})_{\alt}$ are reproducing kernel spaces their kernels can be diagonalized in terms of orthonormal bases. Proposition 1.6 in \cite{hilgert} yields

\begin{prop}
The kernel of $\F(\C^{n})_{\alt}$ can be written as
$$R(x,y)=\sum_{\lambda\in \Pi}d_{\lambda}(x)\overline{d_{\lambda}(y)},$$
where the sum converges absolutely and uniformly on compact subsets of $\C^{n}\times\C^{n}$.
The kernel of $\F(\C^{n\times n})^{U}$ can be written as
 
$$Q(x,y)=\sum_{\lambda\in \Pi}e_{\lambda}(x)\overline{e_{\lambda}(y)},$$
that is 
$$\int_U e^{\Tr(u^{-1}xuy^*)}du=\sum_{\lambda\in \Pi}\frac{\delta !}{(\lambda+\delta)!} \chi_{\lambda}(x)\overline{\chi_{\lambda}(y)},$$
where the sum converges absolutely and uniformly on compact subsets of $\C^{n\times n}\times\C^{n\times n}$.
\end{prop}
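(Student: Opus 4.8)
The plan is to apply the general diagonalization principle for reproducing kernels: in a reproducing kernel Hilbert space of holomorphic functions on a domain, the kernel is recovered as $K(x,y)=\sum_{k}\varepsilon_{k}(x)\overline{\varepsilon_{k}(y)}$ for any orthonormal basis $(\varepsilon_{k})$, with the series converging absolutely and locally uniformly. This is exactly the content of Proposition 1.6 in \cite{hilgert}, so the task reduces to checking its hypotheses for the two spaces at hand and substituting the orthonormal bases and kernels already computed.

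First I would observe that $\F(\C^{n})_{\alt}$ is a closed subspace of the Segal--Bargmann space $\F(\C^{n})$, which is a reproducing kernel Hilbert space of holomorphic functions on $\C^{n}$; a closed subspace of such a space is again a reproducing kernel Hilbert space of holomorphic functions, with reproducing kernel the function $R$ of Proposition \ref{kernel2}. Proposition \ref{ortonalt} supplies the orthonormal basis $(d_{\lambda})_{\lambda\in\Pi}$, and Proposition 1.6 of \cite{hilgert} then yields $R(x,y)=\sum_{\lambda\in\Pi}d_{\lambda}(x)\overline{d_{\lambda}(y)}$ together with the asserted convergence.

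For the second identity I would run the identical argument on $\F(\C^{n\times n})^{U}$, a closed subspace of $\F(\C^{n\times n})$ with reproducing kernel $Q$ as in Proposition \ref{kernel1}. Since the HCIZ integral formula holds, Theorem \ref{teoprincipal} says that $\psi$ is unitary, so by the preceding proposition $(e_{\lambda})_{\lambda\in\Pi}$ is an orthonormal basis of $\F(\C^{n\times n})^{U}$; Proposition 1.6 of \cite{hilgert} gives $Q(x,y)=\sum_{\lambda\in\Pi}e_{\lambda}(x)\overline{e_{\lambda}(y)}$. It then remains only to insert $e_{\lambda}(z)=\sqrt{\delta!/(\lambda+\delta)!}\,\chi_{\lambda}(z)$ and the integral expression for $Q$ from Proposition \ref{kernel1}, which produces the displayed formula for $\int_{U}e^{\Tr(u^{-1}xuy^{*})}\,du$.

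The one step that needs a little attention, and that I would flag as the main (though still modest) obstacle, is the mode of convergence. If the cited proposition is phrased only on the diagonal $x=y$, one upgrades to local uniform convergence on $\C^{n}\times\C^{n}$ (and on $\C^{n\times n}\times\C^{n\times n}$) by Cauchy--Schwarz: a partial tail $\bigl|\sum_{\lambda\in S}d_{\lambda}(x)\overline{d_{\lambda}(y)}\bigr|$ is bounded by $\bigl(\sum_{\lambda\in S}|d_{\lambda}(x)|^{2}\bigr)^{1/2}\bigl(\sum_{\lambda\in S}|d_{\lambda}(y)|^{2}\bigr)^{1/2}$, and $x\mapsto\sum_{\lambda}|d_{\lambda}(x)|^{2}=R(x,x)$ is an increasing limit of continuous functions with continuous limit, hence uniformly convergent on compacta by Dini's theorem, so the tails are uniformly small on a compact set; the same estimate applies verbatim to $Q$. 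Everything else is bookkeeping.
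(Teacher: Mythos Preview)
Your proposal is correct and follows exactly the paper's approach: the proposition is stated as an immediate consequence of Proposition~1.6 in \cite{hilgert}, applied with the orthonormal bases $(d_\lambda)_{\lambda\in\Pi}$ from Proposition~\ref{ortonalt} and $(e_\lambda)_{\lambda\in\Pi}$ from the preceding proposition, together with the explicit kernel formulas of Propositions~\ref{kernel1} and~\ref{kernel2}. Your added paragraph on upgrading diagonal convergence to joint local uniform convergence via Cauchy--Schwarz and Dini is a reasonable elaboration but not required if one simply cites \cite{hilgert} as the paper does.
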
  

A partial converse to this proposition is

\begin{prop}
If the expansion 
$$Q(x,y)=\sum_{\lambda\in \Pi}e_{\lambda}(x)\overline{e_{\lambda}(y)}$$
holds, then $(e_{\lambda})_{\lambda\in \Pi}$ is an orthonormal basis of $\F(\C^{n\times n})^{U}$.
\end{prop}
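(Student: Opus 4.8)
The plan is to read off from the kernel identity that $(e_\lambda)_{\lambda\in\Pi}$ is a Parseval frame of $\F(\C^{n\times n})^U$, and then to promote ``Parseval frame'' to ``orthonormal basis'' by exploiting that the Schur polynomials (equivalently the $e_\lambda$) admit no nontrivial square-summable linear relation.

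First I would prove the Parseval identity $\sum_{\lambda\in\Pi}\lvert\langle F,e_\lambda\rangle\rvert^2=\|F\|^2$ for all $F\in\F(\C^{n\times n})^U$. By the proof of Theorem \ref{teoprincipal} the coherent states $\{Q_x\}_{x\in D_{\reg}}$ have dense linear span, so it is enough to verify the identity on a finite combination $F=\sum_{i=1}^m\alpha_iQ_{x_i}$ and then pass to the limit: the analysis map $F\mapsto(\langle F,e_\lambda\rangle)_\lambda$ is $\ell^2$-isometric on a dense subspace and hence extends to an isometry on the whole space. For such an $F$ the reproducing property of $Q$ and of the $e_\lambda$ gives $\langle F,e_\lambda\rangle=\sum_i\overline{\alpha_i}\,e_\lambda(x_i)$ and $\|F\|^2=\sum_{i,j}\overline{\alpha_i}\alpha_j\,Q(x_i,x_j)$; substituting the hypothesised expansion $Q(x_i,x_j)=\sum_\lambda e_\lambda(x_i)\overline{e_\lambda(x_j)}$ and interchanging it with the finite sum over $i,j$ yields $\|F\|^2=\sum_\lambda\big|\sum_i\overline{\alpha_i}e_\lambda(x_i)\big|^2=\sum_\lambda\lvert\langle F,e_\lambda\rangle\rvert^2$. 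In particular $(e_\lambda)$ is a Bessel sequence whose closed linear span is all of $\F(\C^{n\times n})^U$.

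Next I would show that $\sum_\lambda c_\lambda e_\lambda=0$ in $\F(\C^{n\times n})^U$ with $(c_\lambda)\in\ell^2(\Pi)$ forces $c_\lambda=0$ for every $\lambda$. Since norm convergence in a Segal--Bargmann space implies locally uniform convergence of the holomorphic functions, the relation holds pointwise on $\C^{n\times n}$; restricting to $D$ and multiplying by the Vandermonde $\Delta=a_\delta$ turns it into $\sum_\lambda c_\lambda\sqrt{\delta!/(\lambda+\delta)!}\,a_{\lambda+\delta}=0$ as an identity of holomorphic functions on $\C^n$, with locally uniform convergence preserved. Each $a_{\lambda+\delta}$ is (up to signs) the sum of the monomials $z^{\tau}$ with $\tau$ in the $S_n$-orbit of $\lambda+\delta$, and these orbits are pairwise disjoint for distinct $\lambda\in\Pi$ since an $S_n$-orbit of a strictly decreasing tuple determines the tuple; reading off the coefficient of $z^{\lambda+\delta}$ therefore gives $c_\lambda\sqrt{\delta!/(\lambda+\delta)!}=0$, hence $c_\lambda=0$.

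Finally I would combine the two facts by a standard operator argument: the synthesis operator $T\colon\ell^2(\Pi)\to\F(\C^{n\times n})^U$, $T(c)=\sum_\lambda c_\lambda e_\lambda$, is bounded (Bessel), satisfies $TT^{*}=I$ (Parseval) and $\ker T=\{0\}$ (the relation above). Then $T^{*}$ is an isometry, so its range is closed and equals $(\ker T)^{\perp}=\ell^2(\Pi)$; hence $T^{*}$, and therefore $T$, is unitary, and $(e_\lambda)_{\lambda\in\Pi}$ is the image of the standard orthonormal basis of $\ell^2(\Pi)$ under the unitary $T$, i.e.\ an orthonormal basis of $\F(\C^{n\times n})^U$. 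The main obstacle here is conceptual rather than technical: the kernel expansion by itself only produces a Parseval frame, and a Parseval frame need not be an orthonormal basis (it can be redundant), so one is forced to invoke a property special to these particular $e_\lambda$ --- here the absence of square-summable relations, coming from the distinct leading monomials of the $a_{\lambda+\delta}$ --- in order to exclude redundancy; the remaining points (the two interchanges of summation, the extension from the dense span of coherent states, and the coefficient extraction from a locally uniformly convergent series) are routine.
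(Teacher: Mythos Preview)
Your argument is correct and takes a genuinely different route from the paper. The paper first uses the automatic orthogonality between homogeneous polynomials of different total degree in a Segal--Bargmann space to reduce to a fixed degree $m$; it then plugs $e_{\lambda'}$ into the reproducing identity $e_{\lambda'}(a)=\langle Q_a,e_{\lambda'}\rangle$, expands $Q_a$ via the hypothesis, and reads off $\langle e_\lambda,e_{\lambda'}\rangle=\delta_{\lambda,\lambda'}$ from the linear independence of the finitely many characters $\chi_\lambda$ with $|\lambda|=m$ (phrased there as choosing a point $a$ that separates one $e_{\lambda''}$ from the rest). Completeness is then checked by a separate short argument. By contrast, you obtain the Parseval identity globally from the kernel expansion via coherent states, and then upgrade the resulting Parseval frame to an orthonormal basis by showing $\ker T=\{0\}$ through coefficient extraction in the monomial basis after passing to the $a_{\lambda+\delta}$ on $D$. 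Your frame-theoretic packaging is more portable (it isolates exactly the extra input needed---injectivity of synthesis---to go from ``kernel expansion'' to ``orthonormal basis''), and your linear-independence step via the distinct leading monomials of the $a_{\lambda+\delta}$ is cleaner than the point-separation claim the paper invokes. The paper's approach, on the other hand, avoids the synthesis/analysis operator formalism entirely and gives the Gram matrix $\langle e_\lambda,e_{\lambda'}\rangle$ directly, one degree at a time.
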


\begin{proof}
The $e_{\lambda}$ with $\vert \lambda\vert=\lambda_1+\dots+\lambda_n=m\in\N_0$ are polynomials of degree $m$ and are orthogonal to $e_{\lambda'}$ with $\vert \lambda'\vert=m'\neq m$. Therefore, for $\lambda'$ such that $\vert\lambda'\vert=m\in\N_0$ we have
$$e_{\lambda'}(a)=\langle Q(z,a),e_{\lambda'}(z)\rangle=e_{\lambda'}(a)\langle e_{\lambda'},e_{\lambda'}\rangle + \sum_{\lambda:\vert\lambda\vert=m,\lambda\neq\lambda'}e_{\lambda}(a)\langle e_{\lambda},e_{\lambda'}\rangle.$$
For a $\lambda''$ with $\vert \lambda''\vert=m$ such that $\lambda''\neq\lambda'$ choose an $a\in D$ such that $e_{\lambda''}(a)\neq 0$ and $e_{\lambda}(a)= 0$ for all other $\lambda$ with $\vert\lambda\vert=m$. We conclude that
$$e_{\lambda''}(a)=e_{\lambda''}(a)\langle e_{\lambda''},e_{\lambda'}\rangle=0,$$
so the orthonormality of $(e_{\lambda})_{\lambda\in \Pi}$ follows. To prove that this set is a basis let $F\in\F(\C^{n\times n})^{U}$ be written as a sum $F=\sum_{m\in\N_0}P_m$ of polynomials $P_m$ of degree $m$. If $\langle e_{\lambda},F\rangle=0$ for all $\lambda\in\Pi$, then for $m\in\N_0$ we have $\langle e_{\lambda},P_m\rangle=0$ for all $\lambda$ with $\vert\lambda\vert=m$. Therefore, for $a\in D$ we have
$$P_m(a)=\langle Q_a,P_m\rangle=\left\langle\sum_{\lambda:\vert\lambda\vert=m}\overline{e_{\lambda}(a)}e_{\lambda}(z),P_m(z)\right\rangle=0,$$
so all the $P_m$ vanish and $F=0$.
\end{proof}

The next proposition computes the coefficients of the expansion of an invariant holomorphic function in terms of the characters $\chi_{\lambda}$.

\begin{prop}
For an $F\in\F(\C^{n\times n})^U$ written as
$$F=\sum_{\lambda\in\Pi}f_{\lambda}\chi_{\lambda}$$
the coefficients $f_{\lambda}$ are given by
$$f_{\lambda}=\left(\mbox{  coefficient of  }z^{\lambda+\delta}\mbox{  in  } \Delta.F\vert_D\right)$$
for $\lambda\in\Pi$.
\end{prop}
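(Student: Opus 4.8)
The plan is to connect the expansion $F=\sum_{\lambda\in\Pi}f_\lambda\chi_\lambda$ to the orthonormal basis $(d_\lambda)_{\lambda\in\Pi}$ of $\F(\C^n)_{\alt}$ from Proposition \ref{ortonalt}, using the unitary map $\psi$. First I would apply $\psi$ to both sides of $F=\sum_{\lambda\in\Pi}f_\lambda\chi_\lambda$. Since $\psi(e_\lambda)=d_\lambda$ (as noted in the proof just above), and $e_\lambda=\sqrt{\delta!/(\lambda+\delta)!}\,\chi_\lambda$ while $d_\lambda=\frac{1}{\sqrt{n!(\lambda+\delta)!}}a_{\lambda+\delta}$, we get $\psi(\chi_\lambda)=c\,\Delta\,\chi_\lambda\vert_D=c\,a_{\lambda+\delta}$ directly from $a_\delta\cdot s_\lambda=a_{\lambda+\delta}$ and $c=(\prod_{p=1}^n p!)^{-1/2}$ together with $\delta!=\prod_{p=1}^{n-1}p!$, so $c^2 n!\,\delta!=1$. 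Hence $\psi(F)=c\,\Delta.F\vert_D=\sum_{\lambda\in\Pi}c\,f_\lambda\,a_{\lambda+\delta}$, i.e. $\Delta.F\vert_D=\sum_{\lambda\in\Pi}f_\lambda\,a_{\lambda+\delta}$.

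Next I would extract $f_\lambda$ from this identity by reading off the coefficient of the monomial $z^{\lambda+\delta}$. The key point is that, since $\lambda+\delta$ is a strictly decreasing tuple (being the sum of a partition and $\delta=(n-1,\dots,0)$), the monomial $z^{\lambda+\delta}$ appears in $a_{\mu+\delta}=\det[z_i^{(\mu+\delta)_j}]$ precisely when $\mu=\lambda$, and there it appears with coefficient $1$ (the identity permutation term, $\sgn(\mathrm{id})=1$); for $\mu\neq\lambda$ in $\Pi$, the exponent tuple $\mu+\delta$ is a different strictly decreasing tuple, so no rearrangement of it equals $\lambda+\delta$, and $z^{\lambda+\delta}$ does not occur in $a_{\mu+\delta}$ at all. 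Therefore the coefficient of $z^{\lambda+\delta}$ in $\sum_\mu f_\mu a_{\mu+\delta}$ is exactly $f_\lambda$, which gives the claimed formula.

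I expect the only genuine obstacle to be justifying that the (possibly infinite) expansion $F=\sum_{\lambda\in\Pi}f_\lambda\chi_\lambda$ may be manipulated termwise under $\psi$ and under "extract the coefficient of $z^{\lambda+\delta}$". This is handled by the grading: if $F$ is a polynomial the sum is finite and there is nothing to justify; in general one writes $F=\sum_{m\in\N_0}P_m$ with $P_m$ the degree-$m$ homogeneous part, notes that each $\chi_\lambda$ with $|\lambda|=m$ is homogeneous of degree $m$ so the sum $\sum_{|\lambda|=m}f_\lambda\chi_\lambda$ contributing to $P_m$ is finite, applies the (linear, continuous) map $\psi$ and the coefficient functional degree by degree, and uses that $a_{\lambda+\delta}$ is homogeneous of degree $|\lambda|+\binom{n}{2}$ so that the monomial $z^{\lambda+\delta}$ only receives a contribution from the single degree $m=|\lambda|$ block. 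With that bookkeeping in place the argument is a short computation; I would present it in roughly the order above: compute $\psi(\chi_\lambda)=c\,a_{\lambda+\delta}$, deduce $\Delta.F\vert_D=\sum_\lambda f_\lambda a_{\lambda+\delta}$, then read off the coefficient of $z^{\lambda+\delta}$ using the strict-decrease property of $\lambda+\delta$.
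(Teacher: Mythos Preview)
Your argument is correct and follows essentially the same route as the paper: both use the unitarity of $\psi$ together with $\psi(\chi_\lambda)=c\,a_{\lambda+\delta}$ (equivalently $\psi(e_\lambda)=d_\lambda$) to transport the expansion of $F$ to $\Delta\cdot F\vert_D$, and then read off the coefficient of $z^{\lambda+\delta}$. The only cosmetic difference is that the paper extracts that coefficient via the Segal--Bargmann inner product identity $\langle z^{\mu},G\rangle=\mu!\cdot(\text{coeff.\ of }z^{\mu}\text{ in }G)$ applied to $\langle d_\lambda,c\Delta F\vert_D\rangle$, which sidesteps the termwise-convergence bookkeeping you had to supply, whereas you argue combinatorially that $z^{\lambda+\delta}$ occurs only in $a_{\lambda+\delta}$ among the $a_{\mu+\delta}$.
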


\begin{proof}
We have the expansion of $F$ in terms of the orthonormal basis
$$F=\sum_{\lambda\in\Pi}\langle e_{\lambda},F\rangle e_{\lambda}.$$
By the unitarity of $\psi$ the Fourier coefficients are
$$\langle e_{\lambda},F\rangle=\langle \psi(e_{\lambda}),\psi(F)\rangle=\langle d_{\lambda},c\Delta.F\vert_D\rangle$$
for $\lambda\in\Pi$. Note that 
$$d_{\lambda}(z)=\frac{1}{\sqrt{n!(\lambda+\delta)!}} a_{\lambda+\delta}(z)=\frac{1}{\sqrt{n!(\lambda+\delta)!}}n!P(z^{\lambda+\delta})=\sqrt{\frac{n!}{(\lambda+\delta)!}}P(z^{\lambda+\delta}),$$
where $P$ is the orthogonal projection to the alternating functions as in Section \ref{secprel}. Therefore
$$\langle d_{\lambda},c\Delta.F\vert_D\rangle=\sqrt{\frac{n!}{(\lambda+\delta)!}}\left\langle P(z^{\lambda+\delta}),c\Delta.F\vert_D\right\rangle=\sqrt{\frac{n!}{(\lambda+\delta)!}}\left\langle z^{\lambda+\delta},c\Delta.F\vert_D\right\rangle.$$
Also $\langle z^{\lambda+\delta},c\Delta.F\vert_D\rangle=(\lambda+\delta)!(\mbox{  coefficient of  }z^{\lambda+\delta}\mbox{  in  } c\Delta.F\vert_D)$. Some calculations yield
$$f_{\lambda}=\sqrt{\prod_{p=1}^n p!}\left(\mbox{  coefficient of  }z^{\lambda+\delta}\mbox{  in  } c\Delta.F\vert_D\right),$$
and since $\sqrt{\prod_{p=1}^n p!}=\frac{1}{c}$ the proposition follows.
\end{proof}

\section*{Acknowledgements}
I thank Colin McSwiggen for several discussions related to the Harish-Chandra-Itzykson-Zuber integral and I thank K.-H. Neeb for comments on the manuscript.






\noindent

\begin{thebibliography}{9}

\bibitem[B61]{bargmann} V. Bargmann, \textit{On a Hilbert space of analytic functions and an associated integral transform.} Comm. Pure Appl. Math. 14 (1961), 187--214.

\bibitem[C55]{chevalley} C. Chevalley, \textit{Invariants of finite groups generated by reflections.} Amer. J. Math. 77 (1955), 778--782. 

\bibitem[FR09]{rains} P. J. Forrester, E. M. Rains, \textit{Matrix averages relating to Ginibre ensembles.} J. Phys. A 42 (2009), no. 38, 385205, 13 pp. MR2540392

\bibitem[Ha00]{hall} B. C. Hall, \textit{Holomorphic methods in analysis and mathematical physics.} First Summer School in Analysis and Mathematical Physics (Cuernavaca Morelos, 1998), 1--59, Contemp. Math., 260, Aportaciones Mat., Amer. Math. Soc., Providence, RI, 2000.

\bibitem[Hi08]{hilgert} J. Hilgert, \textit{Reproducing kernels in representation theory.} Symmetries in complex analysis, 1--98, Contemp. Math., 468, Amer. Math. Soc., Providence, RI, 2008. 

\bibitem[HC57]{harish} Harish-Chandra. \textit{Differential operators on a semisimple Lie algebra.} American Journal of Mathematics, 79:87-120, 1957.

\bibitem[IZ80]{itzykson} C. Itzykson, J.-B. Zuber, \textit{The planar approximation. II.} Journal of Mathematical Physics, 21:411-421, 1980.

\bibitem[KL07]{rotinv} A. Kaewthep, W. Lewkeeratiyutkul, \textit{A pointwise bound for rotation-invariant holomorphic functions that are square integrable with respect to a Gaussian measure.} Taiwanese J. Math. 11 (2007), no. 5, 1443--1455.

\bibitem[McS21]{mcswiggen} C. McSwiggen, \textit{The Harish-Chandra integral: an introduction with examples.} Enseign. Math. 67 (2021), no. 3-4, 229--299.

\bibitem[M04]{mehta} Mehta, Madan Lal. Random matrices. Third edition. Pure and Applied Mathematics (Amsterdam), 142. Elsevier/Academic Press, Amsterdam, 2004.


\bibitem[N11]{neretin} Y. A. Neretin, \textit{Lectures on Gaussian integral operators and classical groups.} EMS Series of Lectures in Mathematics. European Mathematical Society (EMS), Zürich, 2011. 

\bibitem[S60]{segal} I. E. Segal, \textit{Mathematical problems of relativistic physics. With an appendix by George W. Mackey.} Proceedings of the Summer Seminar, Boulder, Colorado, 1960, Vol. II. Lectures in Applied Mathematics. American Mathematical Society, Providence, RI, 1963. 

\bibitem[Z02]{zhang} G. Zhang, \textit{Branching coefficients of holomorphic representations and Segal-Bargmann transform.} J. Funct. Anal. 195 (2002), no. 2, 306--349.

\end{thebibliography}
\end{document}